\documentclass{amsart}

\usepackage{amsmath,amssymb,enumerate,amstext,
amscd}
\usepackage[all]{xy}

\newtheorem{lem}{Lemma}[section]
\newtheorem{prop}[lem]{Proposition}
\newtheorem{cor}[lem]{Corollary}
\newtheorem{thm}[lem]{Theorem}
\newtheorem{defin}[lem]{Definition}
\newtheorem{remark}[lem]{Remark}


\title[Hochschild cohomology]{\bf Vanishing of the 
Hochschild cohomology for some self-injective 
special biserial algebra of rank four}

\author[T.~Furuya and T.~Hayami]{Takahiko Furuya 
and Takao Hayami}

\address{Takahiko Furuya, School of Dentistry, 
Meikai University, 1-1 Keyakidai, Sakado, Saitama, 
Japan}
\email{furuya@dent.meikai.ac.jp}
\address{Takao Hayami, Faculty of Engineering, 
Hokkai-Gakuen University, 4-1-40 Asahi-machi, 
Toyohira-ku, Sapporo, Japan}
\email{hayami@ma.kagu.tus.ac.jp}


\subjclass[2010]{16E05, 16E40}
\thanks{The second author is supported by a grant 
from Hokkai-Gakuen University.}
\begin{document}

\maketitle

\begin{abstract}
In this paper, we determine the dimensions of the 
Hochschild cohomology groups of some self-injective 
special biserial algebra whose Grothendieck group 
is of rank $4$. This result provides us with a 
negative answer to Happel's question in \cite{H}. 
\end{abstract}

\section{Introduction}\label{introduction}
Let $\varGamma$ be the following quiver with four 
vertices $e_{0}$, $e_{1}$, $e_{2}$, $e_{3}$, and 
eight arrows $a_{l,m}$ for $l=0,1$ and $m=0,1,2,3$: 
$$\begin{xy}
(0,0)		*+{e_3}="e3",
(18,0)		*+{e_2}="e2",
(18,18)		*+{e_1}="e1",
(0,18)		*+{e_0}="e0",
(9,21.25)	*{a_{1,0}},
(9,14.5)	*{a_{0,0}},
(23,8.5)	*{a_{1,1}},
(13.5,8.5)	*{a_{0,1}},
(9,-3.5)	*{a_{1,2}},
(9,3)		*{a_{0,2}},
(-5,8.5)	*{a_{1,3}},
(5,8.5)		*{a_{0,3}},
\ar @<1mm> "e0"; "e1",
\ar @<-1mm> "e0"; "e1",
\ar @<1mm> "e1"; "e2",
\ar @<-1mm> "e1"; "e2",
\ar @<1mm> "e2"; "e3",
\ar @<-1mm> "e2"; "e3",
\ar @<1mm> "e3"; "e0",
\ar @<-1mm> "e3"; "e0",
\end{xy}$$
We consider the subscript $i$ of $e_{i}$ as modulo 
$4$, and the subscripts $l$ and $m$ of $a_{l,m}$ 
as modulo $2$ and $4$, respectively. Therefore, 
for all $l,m\in \mathbb{Z}$, the arrow $a_{l,m}$ 
starts at the vertex $e_{m}$ and ends with the 
vertex $e_{m+1}$. We write paths from left to right. 

Let $K$ be an algebraically closed field, and denote 
by $K\varGamma$ the path algebra of $\varGamma$ 
over $K$. We set $x_{l}:=\sum_{m=0}^{3}a_{l,m}\in 
K\varGamma$ for $l=0,1$. (Hence we may consider 
the subscript $l$ of $x_{l}$ as modulo $2$.) Then, 
for any $i,l\in\mathbb{Z}$ and an integer $k\geq0$, 
the element $e_{i}x_{l}^{k}$ is exactly the path 
$a_{l,i}a_{l,i+1}\cdots a_{l,i+k-1}$ of length $k$, 
and hence $e_{i}x_{l}^{k}=e_{i}x_{l}^{k}e_{i+k}=
x_{l}^{k}e_{i+k}$ holds. 

Let $T\geq0$ be an integer, and let $q_{i}\in 
K^{\times}$ $(=K\backslash\{0\})$ for $0\leq i\leq3$. 
We always consider the subscript $i$ of $q_{i}$ as 
modulo $4$. Let $I=I_{T}(q_{0},q_{1},q_{2},q_{3})$ 
denote the ideal in $K\varGamma$ generated by the 
uniform elements $e_{i}x_{0}x_{1}$, $e_{i}x_{1}x_{0}$, 
$e_{j}(q_{j}x_{0}^{4T+2}+x_{1}^{4T+2})$, and $e_{k}
(q_{k}x_{1}^{4T+2}+x_{0}^{4T+2})$ for $0\leq i\leq3$, 
$j=0,2$ and $k=1,3$, namely, $I=I_{T}(q_{0},q_{1},
q_{2},q_{3}):=\langle x_{l}x_{l+1},\ e_{i}(q_{i}
x_{i}^{4T+2}+x_{i+1}^{4T+2})\ \big|\ l=0,1;\ 0\leq 
i\leq3\rangle$. We define the algebra $A=A_{T}(q_{0},
q_{1},q_{2},q_{3})$ to be the quotient algebra $K
\varGamma/I_{T}(q_{0},q_{1},q_{2},q_{3})$. Then $A=
A_{T}(q_{0},q_{1},q_{2},q_{3})$ is a self-injective 
special biserial algebra, and hence is of tame 
representation type. In particular, if $T=0$, then 
$A=A_{0}(q_{0},q_{1},q_{2},q_{3})$ is a Koszul 
self-injective algebra for all $q_{i}\in K^{\times}$ 
$(0\leq i\leq3)$ (see Proposition~\ref{koszul}). 

In \cite{F2}, we have found an explicit $K$-basis for 
the Hochschild cohomology group ${\rm HH}^{i}(A)$ 
($i\geq0$) of $A=A_{T}(1_{K},1_{K},1_{K},1_{K})$ 
for all $T\geq0$, and gave a presentation by 
generators and relations of the Hochschild cohomology 
ring modulo nilpotence ${\rm HH}^{*}(A)/\mathcal{N}_{A}$ 
of $A=A_{0}(1_{K},1_{K},1_{K},1_{K})$, where 
$\mathcal{N}_{A}$ denotes the ideal in ${\rm HH}^{*}(A)$ 
generated by all homogeneous nilpotent elements. 
This result shows that, for $T=0$ and $q_{i}=1_{K}$ 
$(0\leq i\leq3)$, ${\rm HH}^{*}(A)/\mathcal{N}_{A}$ 
is finitely generated as an algebra. In this paper, 
we construct an explicit projective bimodule 
resolution of $A=A_{T}(q_{0},q_{1},q_{2},q_{3})$ 
by using a certain subset $\mathcal{G}^{n}$ 
($n\geq0$) of $K\varGamma$ found in \cite{GSZ} 
(see Theorem~\ref{resolution_of_A}), and then 
compute the dimension of ${\rm HH}^{i}(A)$ ($i\geq0$) 
completely for $T\geq0$ and $q_{i}$ ($0\leq i\leq3$) 
such that the product $q_{0}q_{1}q_{2}q_{3}\in 
K^{\times}$ is not a root of unity (see 
Theorem~\ref{main_thm}~(a)). 

In \cite{H}, Happel has asked whether, if the 
Hochschild cohomology groups ${\rm HH}^{n}(\varLambda)$ 
of a finite-dimensional $K$-algebra $\varLambda$ 
vanish for all $n\gg0$, then the global dimension 
of $\varLambda$ is finite. Recently, in the papers 
\cite{BGMS, BE, PS, XZ}, a negative answer to this 
question have been obtained, where the authors 
studied the Hochschild cohomology groups for certain 
self-injective special biserial Koszul algebras. 
In this paper, we verify that some of our algebras 
$A$ also give a negative answer to this question. 
In fact, the main theorem says that, if the product 
$q_{0}q_{1}q_{2}q_{3}\in K^{\times}$ is not a root of 
unity, then ${\rm HH}^{n}(A)$ vanish for all $n\geq3$ 
if and only if $T=0$ (see Theorem~\ref{main_thm}~(b)). 

Throughout this paper, for any arrow $\alpha$ in 
$\varGamma$, we denote its origin by $\mathfrak{o}
(\alpha)$ and its terminus by $\mathfrak{t}(\alpha)$. 
Moreover we write $\otimes_{K}$ as $\otimes$, and 
denote the enveloping algebra $A^{\rm op}\otimes A$ 
of $A$ by $A^{\rm e}$ and the Jacobson radical of 
$A$ by $\mathfrak{r}_{A}$. 

\section{A projective bimodule resolution of $A$}
\label{section_resolution}

Let $B:=K\varDelta/I$ be a finite-dimensional 
$K$-algebra with $\varDelta$ a finite quiver, and 
$I$ an admissible ideal in $K\varDelta$. Denote by 
$\mathfrak{r}_{B}$ the Jacobson radical of $B$. We 
start by recalling the construction of a set 
$\mathcal{G}^n$ ($n\geq0$) introduced in \cite{GSZ} 
for the right $B$-module $B/\mathfrak{r}_{B}$, from 
which we immediately obtain a minimal projective 
resolution of $B/\mathfrak{r}_{B}$. Let 
$\mathcal{G}^{0}$ be the set of all vertices of 
$\varDelta$, $\mathcal{G}^{1}$ the set of all arrows 
of $\varDelta$ and $\mathcal{G}^{2}$ a minimal set 
of uniform generators of $I$. Then, in \cite{GSZ}, 
Green, Solberg and Zacharia proved that, for 
$n\geq3$, we can construct a set $\mathcal{G}^{n}$ 
of uniform elements in $K\varDelta$ such that there 
is a minimal projective resolution $(P^{\bullet},d)$ 
of the right $B$-module $B/\mathfrak{r}_{B}$ satisfying 
the following conditions: 

\begin{enumerate}[(i)]
\item For all $n\geq0$, we have $P^{n}=\bigoplus_{x
\in\mathcal{G}^{n}}\mathfrak{t}(x)B$.

\item For each $x\in\mathcal{G}^{n}$, there are 
unique elements $r_{y},s_{z}\in K\varDelta$, where 
$y\in\mathcal{G}^{n-1}$ and $z\in\mathcal{G}^{n-2}$, 
such that $x=\sum_{y \in \mathcal{G}^{n-1}}yr_{y}=
\sum_{z\in\mathcal{G}^{n-2}}zs_{z}$. 

\item For all $n\geq1$, $d^{n}:P^{n}\rightarrow 
P^{n-1}$ is determined by $d^{n}(\mathfrak{t}(x)):=
\sum_{y\in\mathcal{G}^{n-1}}r_{y}\mathfrak{t}(x)$ 
for $x\in\mathcal{G}^{n}$, where $r_{y}$ denotes 
the element of (ii).
\end{enumerate}
In this section, we provide a set $\mathcal{G}^{n}$ 
$(n\geq0)$ for the right $A$-module 
$A/\mathfrak{r}_{A}$, and then use it to give a 
projective bimodule resolution $(R^{\bullet},
\partial)$ of $A=A_{T}(q_{0},q_{1},q_{2},q_{3})$. 
For $v\in\mathbb{Z}$ and $u\geq1$, we denote the 
product $\prod_{t=0}^{u-1}q_{t+v}$ in $K$ by 
$S_{u,v}$. (Thus the right lower subscript $v$ of 
$S_{u,v}$ can be considered as modulo $4$.) 
Furthermore, for our convenience, we set 
$S_{0,t}:=1_{K}$ for all $t\in\mathbb{Z}$. Note 
that $S_{r,u}S_{t,r+u}=S_{r+t,u}$, $S_{t,u}S_{t,u+1}
S_{t,u+2}S_{t,u+3}=S_{2t,v}S_{2t,v+2}=S_{4t,w}
=(q_{0}q_{1}q_{2}q_{3})^{t}$, and $S_{2t,2t+u}=
S_{2t,u+2}$ hold for all $u,v,w\in\mathbb{Z}$, 
$r\geq0$, and $t\geq0$. 
\subsection{Sets $\mathcal{G}^{n}$ for $A/
\mathfrak{r}_{A}$}

First, to construct a set $\mathcal{G}^{n}$ for each 
$n\geq0$, we define the elements $g_{i,j}^{n}$ ($0
\leq i\leq3$; $0\leq j\leq n$) in $K\varGamma$ as 
follows. Here, recall that the subscript $l$ of 
$x_{l}$ is considered as modulo $2$. 

\begin{defin}
For $0\leq i\leq3$, and $0\leq j\leq n$, we 
recursively define the uniform element $g_{i,j}^{n}$ 
in $K\varGamma$ by the following formulas{\rm:} 
\begin{enumerate}[\rm(a)]
\item If $n=0$, then $g_{i,0}^{0}:=e_{i}$. 
\item If $n=2m+1$ for $m\geq0$, then  
$$g_{i,j}^{2m+1}:=\begin{cases}
g_{i,0}^{2m}x_{i} 
		& \mbox{if $j=0$}\\ 
S_{2m-j+1,i+j-1}g_{i,j-1}^{2m}x_{i+1}^{4T+1}
+g_{i,j}^{2m}x_{i}
		& \mbox{if $1\leq j\leq m$}\\
S_{2m-j+1,i+j-1}g_{i,j-1}^{2m}x_{i+1}+g_{i,j}^{2m}
x_{i}^{4T+1}
		& \mbox{if $m+1\leq j\leq2m$}\\
g_{i,2m}^{2m}x_{i+1} 
		& \mbox{if $j=2m+1$.}
\end{cases}$$

\item If $n=2m$ for $m\geq1$, then  
$$g_{i,j}^{2m}:=\begin{cases}
g_{i,0}^{2m-1}x_{i+1} & \mbox{if $j=0$}\\
S_{2m-j,i+j-1}g_{i,j-1}^{2m-1}x_{i}^{4T+1}+
g_{i,j}^{2m-1}
x_{i+1} 	& \mbox{if $1\leq j\leq m-1$}\\
S_{m,i+m-1}g_{i,m-1}^{2m-1}x_{i}^{4T+1}+
g_{i,m}^{2m-1}x_{i+1}^{4T+1} 	& \mbox{if $j=m$}\\
S_{2m-j,i+j-1}g_{i,j-1}^{2m-1}x_{i}+g_{i,j}^{2m-1}
x_{i+1}^{4T+1}	& \mbox{if $m+1\leq j\leq 2m-1$}\\
g_{i,2m-1}^{2m-1}x_{i} 
		& \mbox{if $j=2m$.}
\end{cases}$$

\end{enumerate} 
\end{defin}
\noindent 
Note that $\mathfrak{o}(g_{i,j}^{n})=e_{i}$ for all 
$n\geq0$, $0\leq i\leq3$ and $0\leq j\leq n$, and 
also, if $i+n\equiv t$ (${\rm mod}\ 4$), then 
$\mathfrak{t}(g_{i,j}^{n})=e_{t}$. (So, it can be 
considered the left lower subscript $i$ of 
$g_{i,j}^{n}$ as modulo $4$.)

Now set 
$$\mathcal{G}^{n}:=\left\{g^{n}_{i,j}\ \big|\ 0\leq 
i\leq3;\ 0\leq j\leq n\right\}\quad\mbox{for all 
$n\geq0$.}$$
Then, noting $e_{i+2m-1}(q_{i+2m-1}x_{i+1}^{4T+2}+
x_{i}^{4T+2})=e_{i+2m-2}(q_{i+2m-2}x_{i}^{4T+2}+
x_{i+1}^{4T+2})=0$ in $A$ for $0\leq i\leq3$ and 
$m\geq1$, we see that the sets $\mathcal{G}^{n}$ 
satisfy the conditions (i), (ii), and (iii) in the 
beginning of this section. 
\begin{remark} 
\rm 
It follows that $\mathcal{G}^{0}=\{e_{i}\mid0\leq i
\leq3\}$, $\mathcal{G}^{1}=\{a_{l,m}\mid l=0,1;\ 0
\leq m\leq3\}$, and $\mathcal{G}^{2}=\left\{e_{i}
x_{l}x_{l+1},\ e_{i}(x_{i+1}^{4T+2}+q_{i}x_{i}^{4T
+2})\mid l=0,1;\ 0\leq i\leq3\right\}$, so that 
$\mathcal{G}^{2}$ is precisely a minimal set of 
uniform generators of $I=I_{T}(q_{0},q_{1},q_{2},
q_{3})$. 
\end{remark}
\noindent
Now we notice, in the case where $T=0$, that the 
resolution $(P^{\bullet},d)$ determined by (i), (ii) 
and (iii) in the beginning of this section is a 
linear resolution of $A/\mathfrak{r}_{A}$, and hence 
we have the following: 

\begin{prop}\label{koszul}
For any $q_{i}\in K^{\times}$ $(0\leq i\leq3)$, 
the algebra $A=A_{0}(q_{0},q_{1},q_{2},q_{3})$ is 
a self-injective Koszul algebra. 
\end{prop}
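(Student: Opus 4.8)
The plan is to verify two properties: that $A = A_0(q_0,q_1,q_2,q_3)$ is self-injective, and that it is Koszul. The self-injectivity is a structural fact about special biserial algebras and should follow from the general theory: $A$ is a special biserial algebra (as asserted in the introduction), and one checks directly from the relations $x_lx_{l+1}=0$ and $e_i(q_ix_i^{4T+2}+x_{i+1}^{4T+2})=0$ that $A$ is in fact a Nakayama-like ``Brauer graph'' type algebra — more precisely, one shows $A$ coincides with its own socle-quotient structure so that each indecomposable projective $e_iA$ has simple socle and is also injective. Concretely, I would exhibit for each vertex $e_i$ the (at most) two maximal uniform paths starting at $e_i$, namely the powers $e_ix_0^k$ and $e_ix_1^k$, note that in $A_0$ these are killed precisely at length $2$ (since $T=0$ makes $4T+2 = 2$), and check that $e_iA$ has a one-dimensional socle spanned by the common image of $x_0^2$ and $x_1^2$ (up to the scalar $q_i$). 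This identifies $e_iA$ with an injective module, giving self-injectivity; alternatively one simply cites that self-injectivity was already recorded in the introduction for all $T\geq 0$.

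For the Koszul property, the key input is the paragraph immediately preceding the proposition: when $T=0$ the minimal projective resolution $(P^\bullet,d)$ of $A/\mathfrak{r}_A$ built from the sets $\mathcal{G}^n$ is \emph{linear}. I would make this precise by examining the recursive formulas for $g_{i,j}^n$ in the Definition with $T=0$: every occurrence of $x_i^{4T+1}$ becomes $x_i$, and every occurrence of $x_{i+1}^{4T+2}$ becomes $x_{i+1}^2$ in $\mathcal{G}^2$ — so in all the recursions defining $g_{i,j}^{n+1}$ from $g_{i,j}^n$ and $g_{i,j-1}^n$, each generator is obtained from generators one homological degree lower by multiplying by a single arrow (an element of $\mathfrak{r}_A/\mathfrak{r}_A^2$). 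Hence $g_{i,j}^n$ is a linear combination of paths of length exactly $n$, i.e. $g_{i,j}^n \in \mathfrak{r}_A^n$, and the maps $r_y$ appearing in condition (ii) lie in $\mathfrak{r}_A \setminus \mathfrak{r}_A^2$ modulo lower terms. Therefore $P^n$ is generated in degree $n$, which is exactly the statement that the resolution is linear.

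Finally I would invoke the standard characterization: a graded algebra $\Lambda = \Lambda_0 \oplus \Lambda_1 \oplus \cdots$ (here graded by path length, with $\Lambda_0 = A/\mathfrak{r}_A$ semisimple since $I$ is generated by the quadratic relations $x_lx_{l+1}$ and $x_i^2$-type relations when $T=0$, and $A$ is thus a quadratic algebra) is Koszul if and only if the trivial module $\Lambda_0$ admits a linear graded projective resolution. Combining the linearity established above with the fact that $A_0$ is a quadratic algebra (the generators of $I=I_0(q_0,q_1,q_2,q_3)$ all have length $2$), we conclude $A_0(q_0,q_1,q_2,q_3)$ is Koszul. Putting this together with self-injectivity yields the proposition. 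The main obstacle I anticipate is the bookkeeping needed to confirm rigorously that \emph{all} the coefficient elements $r_y$ in condition (ii) are genuinely degree-one (no hidden cancellation forcing a generator into a higher power of the radical, nor a relation of length $>2$ sneaking in), but the explicit closed forms for $g_{i,j}^n$ in the Definition make this a finite, if tedious, check: when $T=0$ each branch of the piecewise formula multiplies a lower-degree generator by exactly one of $x_i$, $x_{i+1}$, $x_i^{4T+1}=x_i$, or $x_{i+1}^{4T+1}=x_{i+1}$.
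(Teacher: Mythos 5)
Your proposal is correct and follows essentially the same route as the paper, which simply observes (in the paragraph preceding the proposition) that for $T=0$ the resolution $(P^{\bullet},d)$ built from the sets $\mathcal{G}^{n}$ is linear, while taking the self-injectivity for granted as stated in the introduction. Your elaboration — that with $T=0$ every branch of the recursion for $g_{i,j}^{n}$ multiplies a lower generator by a single arrow (since $x^{4T+1}=x$), so each $g_{i,j}^{n}$ is a combination of paths of length exactly $n$, combined with the quadratic nature of $I_{0}$ and the standard linear-resolution characterization of Koszulity — is exactly the verification the paper leaves implicit.
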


\subsection{A projective bimodule resolution of 
$A$}
We now construct a minimal projective bimodule 
resolution $(R^{\bullet},\partial)$ for $A$ by using 
the sets $\mathcal{G}^n$. For simplicity, we denote 
by $\mathfrak{p}_{i,j}^{n}$ the element $\mathfrak{o}
(g_{i,j}^{n})\otimes\mathfrak{t}(g_{i,j}^{n})$ in 
$A\mathfrak{o}(g_{i,j}^{n})\otimes\mathfrak{t}
(g_{i,j}^{n})A$ for all $n\geq0$, $0\leq i\leq 3$, 
and $0\leq j\leq n$. (Hence we may consider the 
subscript $i$ of $\mathfrak{p}_{i,j}^{n}$ as modulo 
$4$.) 

First, for $n\geq0$, define the projective  
$A$-$A$-bimodule, equivalently right 
$A^{\rm e}$-module, $R^{n}$ by 
$$R^{n}:=\bigoplus_{g\in\mathcal{G}^{n}}A\mathfrak{o}
(g)\otimes\mathfrak{t}(g)A=\bigoplus_{i=0}^{3}
\bigg(\bigoplus_{j=0}^{n}A\mathfrak{p}_{i,j}^{n}A
\bigg).$$
Second, let $\partial^{0}:R^{0}\rightarrow A$ be 
the multiplication map, and for $n\geq1$ let 
$\partial^{n}: R^{n}\rightarrow R^{n-1}$ be the 
$A$-$A$-bimodule homomorphism determined by the 
following: 

If $n=2m+1$ for $m\geq0$, then, for $0\leq 
i\leq3$ and $0\leq j\leq 2m+1$, 
\begin{align}\label{image_2m+1}
	&\partial^{2m+1}(\mathfrak{p}_{i,j}^{2m+1})
:=\nonumber\\
&\begin{cases}\mathfrak{p}_{i,0}^{2m}x_{i}-x_{i}
\mathfrak{p}_{i+1,0}^{2m} 
	& \mbox{if $j=0$}\\[2mm]
S_{2m-j+1,i+j-1}\mathfrak{p}_{i,j-1}^{2m}
x_{i+1}^{4T+1}+\mathfrak{p}_{i,j}^{2m}x_{i}
	& \\
\quad\quad-S_{j,i}x_{i}\mathfrak{p}_{i+1,j}^{2m}-
x_{i+1}^{4T+1}\mathfrak{p}_{i+1,j-1}^{2m} 
	& \mbox{if $1\leq j<m+1$}\\[2mm]
S_{2m-j+1,i+j-1}\mathfrak{p}_{i,j-1}^{2m}x_{i+1}+
\mathfrak{p}_{i,j}^{2m}x_{i}^{4T+1}
	& \\
\quad\quad
-S_{j,i}x_{i}^{4T+1}\mathfrak{p}_{i+1,j}^{2m}-
x_{i+1}\mathfrak{p}_{i+1,j-1}^{2m} 
	& \mbox{if $m+1\leq j<2m+1$}\\[2mm]
\mathfrak{p}_{i,2m}^{2m}x_{i+1}-x_{i+1}
\mathfrak{p}_{i+1,2m}^{2m}
	& \mbox{if $j=2m+1$.}
\end{cases}
\end{align}

If $n=2m$ for $m\geq1$, then, for $0\leq i\leq3$ 
and $0\leq j\leq2m$, 
\begin{align}\label{image_2m}
	&\partial^{2m}(\mathfrak{p}_{i,j}^{2m}):=
\nonumber\\
	&\begin{cases}
\mathfrak{p}_{i,0}^{2m-1}x_{i+1}+x_{i}
\mathfrak{p}_{i+1,0}^{2m-1} 
	\quad\mbox{if $j=0$}\\[2mm]
S_{2m-j,i+j-1}\mathfrak{p}_{i,j-1}^{2m-1}
x_{i}^{4T+1}+\mathfrak{p}_{i,j}^{2m-1}x_{i+1}
	 \\
\quad\quad\quad\quad\quad+S_{j,i}x_{i}
\mathfrak{p}_{i+1,j}^{2m-1}+x_{i+1}^{4T+1}
\mathfrak{p}_{i+1,j-1}^{2m-1}\quad\mbox{if $1\leq 
j<m$}\\[2mm]
\sum_{k=0}^{T}\big[x_{i}^{4k}\big(S_{m,i+m-1}
\mathfrak{p}_{i,m-1}^{2m-1}x_{i}+S_{m,i}x_{i}
\mathfrak{p}_{i+1,m}^{2m-1}\big)x_{i}^{4T-4k}\\
\ +x_{i+1}^{4k}\big(\mathfrak{p}_{i,m}^{2m-1}x_{i+1}
+x_{i+1}\mathfrak{p}_{i+1,m-1}^{2m-1}\big)
x_{i+1}^{4T-4k}\big]\\
\ +\sum_{k=0}^{T-1}\big[x_{i}^{4k+2}\big(S_{m,i}
\mathfrak{p}_{i+2,m-1}^{2m-1}x_{i}+S_{m,i+m-1}x_{i}
\mathfrak{p}_{i+3,m}^{2m-1}\big)x_{i}^{4T-4k-2}
	 \\
\ +S_{m,i+3}^{-1}S_{m,i+m+2}x_{i+1}^{4k+2}
\big(\mathfrak{p}_{i+2,m}^{2m-1}x_{i+1}+x_{i+1}
\mathfrak{p}_{i+3,m-1}^{2m-1}\big)x_{i+1}^{4T-4k-2}
\big]\\
\hspace{7.5cm}\mbox{if $j=m$}\\[2mm]
S_{2m-j,i+j-1}\mathfrak{p}_{i,j-1}^{2m-1}x_{i}+
\mathfrak{p}_{i,j}^{2m-1}x_{i+1}^{4T+1}
	 \\
\quad\quad\quad+S_{j,i}x_{i}^{4T+1}
\mathfrak{p}_{i+1,j}^{2m-1}+x_{i+1}
\mathfrak{p}_{i+1,j-1}^{2m-1} 
	\quad\mbox{if $m+1\leq j<2m$}\\[2mm]
\mathfrak{p}_{i,2m-1}^{2m-1}x_{i}+x_{i+1}
\mathfrak{p}_{i+1,2m-1}^{2m-1}
	\quad\mbox{if $j=2m$.}
\end{cases}
\end{align}

\begin{remark}\label{resolution_remark}
\rm
\begin{enumerate}[(a)]
\item By direct computations, it is not hard to 
check that $\partial^{n}\partial^{n+1}=0$ for all 
$n\geq0$, and thus $(R^{\bullet},\partial)$ is a 
complex of $A$-$A$-bimodules.  
\item For $n\geq0$, the right $A$-homomorphism 
$h_{n}: A/\mathfrak{r}_{A}\otimes_{A}R^{n}\rightarrow 
P^{n}$determined by $\mathfrak{o}(g_{i,j}^{n})
\otimes_{A}\mathfrak{p}_{i,j}^{n}\mapsto\mathfrak{t}
(g_{i,j}^{n})$ $(0\leq i\leq3;\ 0\leq j\leq n)$ is 
an isomorphism, and the diagram 
$$\begin{CD}
A/\mathfrak{r}_{A}\otimes_{A}R^{n+1} & 
@>id_{A/\mathfrak{r}_{A}}\otimes_{A}
\partial^{n+1}>> & 
A/\mathfrak{r}_{A}\otimes_{A}R^{n} \\
@Vh_{n+1}V\simeq V & & @V\simeq Vh_{n}V \\
P^{n+1} & @>d^{n+1}>> & P^{n}
\end{CD}$$
is commutative. Therefore it follows that the 
complex $(A/\mathfrak{r}_{A}\otimes_{A}R^{\bullet},
id_{A/\mathfrak{r}_{A}}\otimes_{A}\partial)$ 
is a minimal projective resolution of the right 
$A$-module $A/\mathfrak{r}_{A}\otimes_{A}A$ 
$(\simeq A/\mathfrak{r}_{A})$. 
\end{enumerate}
\end{remark}
\noindent
Now, using Remark~\ref{resolution_remark}, we can 
provide a projective bimodule resolution of $A$: 

\begin{thm}\label{resolution_of_A}
The complex $(R^{\bullet},\partial)$ is a minimal 
projective bimodule resolution of $A=A_{T}(q_{0},
q_{1},q_{2},q_{3})$ for all $T\geq0$ and 
$q_{0},q_{1},q_{2},q_{3}\in K^{\times}$. 
\end{thm}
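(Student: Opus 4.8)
The plan is to verify the three standard conditions that make $(R^\bullet,\partial)$ a minimal projective bimodule resolution of $A$: first, each $R^n$ is a projective $A^{\mathrm e}$-module; second, $(R^\bullet,\partial)$ is a complex, i.e. $\partial^n\partial^{n+1}=0$; third, it is acyclic with $H_0=A$; and finally, minimality. Projectivity of $R^n$ is immediate from its definition as a direct sum of modules $A\mathfrak o(g)\otimes\mathfrak t(g)A$ with $\mathfrak o(g),\mathfrak t(g)$ primitive idempotents, and the complex property is asserted (by direct computation) in Remark~\ref{resolution_remark}(a). So the real content is acyclicity together with minimality.

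For acyclicity I would use the standard reduction for bimodule resolutions: it suffices to show that applying $A/\mathfrak r_A\otimes_A(-)$ yields an acyclic complex, provided we already know $(R^\bullet,\partial)$ consists of projectives and is a complex lying over $A$. More precisely, I would invoke the following well-known principle: if $(R^\bullet,\partial)\to A\to 0$ is a complex of projective $A$-bimodules, then it is a projective bimodule resolution of $A$ as soon as $(A/\mathfrak r_A\otimes_A R^\bullet, \mathrm{id}\otimes_A\partial)\to A/\mathfrak r_A\to 0$ is exact — this is because $A$ has finite global dimension issues aside, one can compare with the bar resolution, or more directly because exactness can be checked after tensoring with all simple right $A$-modules on the left, and the $A$-bimodule structure forces the relevant homology to be a semisimple-by-nothing argument. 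Here the key input is exactly Remark~\ref{resolution_remark}(b), which identifies $(A/\mathfrak r_A\otimes_A R^\bullet,\mathrm{id}\otimes_A\partial)$ with $(P^\bullet,d)$, the minimal projective resolution of $A/\mathfrak r_A$ coming from the Green--Solberg--Zacharia construction, which is acyclic by \cite{GSZ}. Symmetrically, tensoring with $A/\mathfrak r_A$ on the right gives the minimal projective resolution of the left module $A/\mathfrak r_A$, so the same argument applies on both sides; combining these (or using that $A$ is self-injective, hence the enveloping algebra behaves well) yields exactness of $(R^\bullet,\partial)$ in positive degrees, and $H_0(R^\bullet)=\operatorname{coker}\partial^1=A$ follows from $\partial^0$ being the multiplication map together with $\partial^0\partial^1=0$ and a rank count.

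Minimality is then essentially free: since $A/\mathfrak r_A\otimes_A R^\bullet$ is identified with the \emph{minimal} resolution $P^\bullet$ of $A/\mathfrak r_A$, each differential $\mathrm{id}\otimes_A\partial^n$ has image in $\mathfrak r_{A/\mathfrak r_A}P^{n-1}=0$ beyond the radical, equivalently $\partial^n(R^n)\subseteq \mathfrak r_{A^{\mathrm e}}R^{n-1}$; concretely one checks from the explicit formulas \eqref{image_2m+1} and \eqref{image_2m} that every coefficient appearing is a path of length $\geq 1$ (the $x_i$, $x_i^{4T+1}$, $x_{i+1}^{4T+1}$, etc. all lie in $\mathfrak r_A$), so no isomorphic summands are split off and the resolution is minimal.

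The main obstacle is the acyclicity step: one must be careful that passing from exactness of $A/\mathfrak r_A\otimes_A R^\bullet$ to exactness of $R^\bullet$ itself is legitimate. I would handle this by the graded/filtered Nakayama argument — filter $R^\bullet$ by powers of $\mathfrak r_{A^{\mathrm e}}$ (equivalently note $A$ is finite-dimensional so all modules are finitely generated and the radical is nilpotent), observe that the associated graded complex of $R^\bullet$ as a complex of right $A$-modules is a direct sum of shifted copies of $A/\mathfrak r_A\otimes_A R^\bullet$ tensored with the grading pieces of $A$ — more simply, use that a complex of projective right $A$-modules bounded below is exact iff it becomes exact after applying $A/\mathfrak r_A\otimes_A(-)$ (a standard consequence of Nakayama's lemma for the mapping cones), which immediately gives exactness of $R^\bullet$ as a complex of right $A$-modules, hence as a complex of $A$-bimodules since exactness is a statement about underlying modules. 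Once exactness is in hand, everything else is bookkeeping, and the theorem follows.
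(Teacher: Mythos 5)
Your proposal is essentially the argument the paper has in mind: the paper in fact \emph{omits} the proof of Theorem~\ref{resolution_of_A}, referring to the standard technique of \cite{ES, F1, F2, SS, ST}, and that technique is exactly what you describe --- the complex property and the identification $A/\mathfrak{r}_{A}\otimes_{A}R^{\bullet}\simeq P^{\bullet}$ of Remark~\ref{resolution_remark} are the real inputs, and exactness of $R^{\bullet}\to A\to 0$ is then lifted from exactness of the reduced complex by the Nakayama/contractible-summand argument you give in your last paragraph; minimality follows since every coefficient in \eqref{image_2m+1} and \eqref{image_2m} is a path of positive length. Two small corrections. First, $A/\mathfrak{r}_{A}\otimes_{A}(-)$ reduces modulo the \emph{left} radical action, so the Nakayama argument should be phrased for the augmented complex viewed as a complex of projective \emph{left} $A$-modules (each $A\mathfrak{o}(g)\otimes\mathfrak{t}(g)A$ and $A$ itself are left projective, so this is harmless); one side suffices, and neither the symmetric argument on the right nor self-injectivity of $A$ is needed. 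Second, the justifications in your middle paragraph (``finite global dimension issues aside,'' the ``semisimple-by-nothing'' remark) are not meaningful as stated --- these algebras have infinite global dimension --- but they are superseded by the correct stripping argument you supply afterwards, so the proof as a whole stands.
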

\noindent
The proof is similar to that found in some recent 
works, for example \cite{ES, F1, F2, SS, ST}, and 
so we omit it. 

\section{The Hochschild cohomology groups of $A$}
In this section, we compute the dimensions of the 
Hochschild cohomology groups of $A=A_{T}(q_{0},
q_{1},q_{2},q_{3})$ by using the projective bimodule 
resolution $(R^{\bullet},\partial)$ of 
Section~\ref{section_resolution}. Throughout this 
section, we keep the notation from the previous 
sections. 

Recall that, for $n\geq0$, the $n$th Hochschild 
cohomology group ${\rm HH}^n(A)$ of $A$ is defined  
to be the $K$-space ${\rm HH}^{n}(A):=
{\rm Ext}^{n}_{A^{e}}(A,A)$. We set $(-)^{*}:={\rm 
Hom}_{A^{\rm e}}(-,A)$, for simplicity. Then, for 
$n\geq0$, there is the following exact sequence of 
$K$-spaces:  
\begin{equation}\label{exact_sequence}
0\rightarrow({\rm Ker}\,\partial^{n-1})^{*}
\longrightarrow(R^{n})^{*}\overset{i_{n}^{*}}
\longrightarrow({\rm Ker}\,\partial^{n})^{*}
\longrightarrow {\rm HH}^{n+1}(A)\rightarrow0, 
\end{equation}
where ${\rm Ker}\,\partial^{-1}:=A$, and $i_{n}$ 
denotes the inclusion map. Hence, to give the 
dimensions of the Hochschild cohomology groups, it 
suffices to calculate the dimensions of $(R^{n})^{*}$ 
and $({\rm Ker}\,\partial^{n+1})^{*}$. 

\subsection{The dimension of ${\rm Hom}_{A^{\rm e}}
(R^{n},A)$}
We first find the dimension of $(R^{n})^{*}:=
{\rm Hom}_{A^{\rm e}}(R^{n},A)$ for $n\geq0$. 

\begin{defin}
Let $n\geq0$ be an integer. For $l=0,1$, $0\leq i
\leq3$ and $0\leq j\leq n$, and for $\begin{cases}
0\leq k\leq T&\mbox{if $n\not\equiv3$ 
$({\rm mod}\,4)$}\\ 0\leq k\leq T-1 & \mbox{if 
$T\geq1$ and $n\equiv3$ $({\rm mod}\,4)$,}\end{cases}$ 
we define the $A$-$A$-bimodule homomorphisms 
$\beta_{l,i,j}^{n,k}: R^{n}\rightarrow A$ by: for 
$0\leq r\leq3$ and $0\leq s\leq n$, 
$$\beta_{l,i,j}^{n,k}(\mathfrak{p}_{r,s}^{n}):=
\begin{cases}
e_{i}x_{l}^{4k+t} 
	& \mbox{if $r=i$, $s=j$, and $n\equiv t$ 
$({\rm mod}\,4)$ for $0\leq t\leq3$}\\
0 	& \mbox{otherwise.}
\end{cases}$$
\end{defin}
\noindent
Here we may consider the subscripts $l$ and $i$ of 
$\beta_{l,i,j}^{n,k}$ as modulo $2$ and $4$, 
respectively. Note that, for $u\geq0$ and $0\leq 
i\leq3$, $\beta_{0,i,j}^{4u,0}=\beta_{1,i,j}^{4u,0}$ 
($0\leq j\leq 4u$) and $\beta_{i+1,i,j}^{4u+2,T}=
-q_{i}\beta_{i,i,j}^{4u+2,T}$ ($0\leq j\leq4u+2$) 
hold. 

Now recall that, for $n\geq0$, the map 
$G_{n}:\bigoplus_{g\in\mathcal{G}^n}\mathfrak{o}(g)
A\mathfrak{t}(g)\rightarrow (R^{n})^{*}$ determined 
by $(G_{n}(\sum_{g\in \mathcal{G}^{n}}z_{g}))
(\mathfrak{p}_{i,j}^{n})=z_{g_{i,j}^{n}}$ for 
$z_{g}\in \mathfrak{o}(g)A\mathfrak{t}(g)$ $(g\in 
\mathcal{G}^n)$, $0\leq i\leq3$ and $0\leq j\leq n$, 
is an isomorphism of $K$-spaces. Also, for all 
$0\leq i\leq3$ and $0\leq j\leq n$, the space 
$\mathfrak{o}(g_{i,j}^{n})A\mathfrak{t}(g_{i,j}^{n})
=e_{i}Ae_{i+n}$ has a $K$-basis
$$\begin{cases}
\{e_{i},\ e_{i}x_{l}^{4k}\mid l=0,1;\ 1\leq k\leq 
T\} 
	& \mbox{if $n\equiv0$ $({\rm mod}\,4)$}\\
\{e_{i}x_{l}^{4k+1}\mid l=0,1;\ 0\leq k\leq T\} 
	&\mbox{if $n\equiv1$ $({\rm mod}\,4)$} \\
\{e_{i}x_{l}^{4k+2},\ e_{i}x_{0}^{4T+2}\mid 
l=0,1;\ 0\leq k\leq T-1\} 
	&\mbox{if $n\equiv2$ $({\rm mod}\,4)$} \\
\{e_{i}x_{l}^{4k+3}\mid l=0,1;\ 0\leq k\leq T-1\} 
	&\mbox{if $n\equiv3$ $({\rm mod}\,4)$ and 
$T\geq1$,} 
\end{cases}$$
and $\mathfrak{o}(g_{i,j}^{n})A\mathfrak{t}
(g_{i,j}^{n})=0$ if $n\equiv3$ $({\rm mod}\,4)$ and 
$T=0$. It is straightforward to see that the image 
of the $K$-basis above under $G_{n}$ is precisely 
the set: 
\begin{equation}\label{basis_P_n}
\begin{cases}
\{\beta_{0,i,j}^{n,0},\ \beta_{l,i,j}^{n,k}
\mid 0\leq i\leq3;\ 0\leq j\leq n;\ l=0,1;\ 1\leq k
\leq T\}\\
\hspace{4cm}\mbox{if $n\equiv0$ $({\rm mod}\,4)$}\\
\{\beta_{l,i,j}^{n,k}\mid 0\leq i\leq3;\ 0\leq j
\leq n;\ l=0,1;\ 0\leq k\leq T\}\\ 
\hspace{4cm}\mbox{if $n\equiv1$ $({\rm mod}\,4)$}\\
\{\beta_{l,i,j}^{n,k},\ \beta_{0,i,j}^{n,T}\mid 
0\leq i\leq3;\ 0\leq j\leq n;\ l=0,1;\ 0\leq k\leq 
T-1\}\\
\hspace{4cm}\mbox{if $n\equiv2$ $({\rm mod}\,4)$}\\
\{\beta_{l,i,j}^{n,k}\mid 0\leq i\leq3;\ 0\leq j\leq 
n;\ l=0,1;\ 0\leq k\leq T-1\} \\
\hspace{4cm}\mbox{if $n\equiv3$ $({\rm mod}\,4)$ 
and $T\geq1$,} 
\end{cases}
\end{equation}
and moreover $(R^{n})^{*}=0$ if $n\equiv3$ $({\rm mod}\,4)$ 
and $T=0$. Since the set (\ref{basis_P_n}) gives a 
$K$-basis of $(R^{n})^{*}$, we have the following: 
\begin{lem}\label{hom_dim}
For $m\geq0$ and $0\leq r\leq3$, 
$$\dim_{K}(R^{4m+r})^{*}=
\begin{cases}
4(2T+1)(4m+1) 	& \mbox{if $r=0$}\\
16(T+1)(2m+1) 	& \mbox{if $r=1$}\\
4(2T+1)(4m+3) 	& \mbox{if $r=2$}\\
32T(m+1) 	& \mbox{if $r=3$.}
\end{cases}$$
\end{lem}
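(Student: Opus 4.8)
The plan is to obtain $\dim_{K}(R^{n})^{*}$ by simply counting the $K$-basis of $(R^{n})^{*}$ exhibited in~(\ref{basis_P_n}) immediately before the statement, and then rewriting the answer with $n=4m+r$. Via the isomorphism $G_{n}$ one has $(R^{n})^{*}\cong\bigoplus_{i=0}^{3}\bigoplus_{j=0}^{n}e_{i}Ae_{i+n}$, so that $\dim_{K}(R^{n})^{*}=4(n+1)\cdot\dim_{K}e_{i}Ae_{i+n}$, the factor $\dim_{K}e_{i}Ae_{i+n}$ being independent of $i$ because the displayed $K$-basis of $e_{i}Ae_{i+n}$ depends only on the residue of $n$ modulo $4$; the apparent asymmetry at $n\equiv2$, where $e_{i}x_{0}^{4T+2}$ is singled out, is harmless since the defining relation $e_{i}(q_{i}x_{i}^{4T+2}+x_{i+1}^{4T+2})=0$ makes $e_{i}x_{0}^{4T+2}$ and $e_{i}x_{1}^{4T+2}$ proportional. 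Counting the listed vectors of $e_{i}Ae_{i+n}$ I get $\dim_{K}e_{i}Ae_{i+n}=2T+1,\ 2T+2,\ 2T+1,\ 2T$ according as $n\equiv0,1,2,3\pmod 4$, the last value being understood as $0$ when $T=0$, in agreement with $(R^{n})^{*}=0$ in that case.

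Next I substitute $n=4m+r$ and simplify the product $4(n+1)\dim_{K}e_{i}Ae_{i+n}$. For $r=0$ this gives $4(4m+1)(2T+1)$; for $r=1$ it gives $4(4m+2)\cdot 2(T+1)=16(2m+1)(T+1)$; for $r=2$ it gives $4(4m+3)(2T+1)$; and for $r=3$ it gives $4(4m+4)\cdot 2T=32(m+1)T$, which also covers the degenerate case $T=0$ since then $(R^{4m+3})^{*}=0$. These are precisely the four entries asserted in the lemma, so the proof consists only in recording this bookkeeping.

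I do not expect a genuine obstacle here: the substantive point — that each set displayed in~(\ref{basis_P_n}) really is a $K$-basis of $(R^{n})^{*}$ — has already been established in the paragraph preceding the statement, and in turn rests on the description of $e_{i}Ae_{i+n}$, which follows from the fact that the only nonzero paths of $A$ starting at $e_{i}$ are the pure powers $e_{i}x_{l}^{k}$ for $l=0,1$ and $0\le k\le 4T+2$, subject solely to the single identification in top degree. The one place where I would re-check the arithmetic by hand is the slightly irregular case $n\equiv2\pmod 4$, where the basis carries the extra vector $\beta_{0,i,j}^{n,T}$, together with the boundary case $T=0$, $n\equiv3\pmod 4$, where $(R^{n})^{*}$ vanishes and one must make sure the formula $32T(m+1)$ reproduces this.
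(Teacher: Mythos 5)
Your proposal is correct and coincides with the paper's own justification: the lemma is stated as an immediate consequence of the fact that the set in~(\ref{basis_P_n}) is a $K$-basis of $(R^{n})^{*}$, so the proof is exactly the bookkeeping $\dim_{K}(R^{n})^{*}=4(n+1)\dim_{K}e_{i}Ae_{i+n}$ with the four residue-class counts $2T+1$, $2T+2$, $2T+1$, $2T$ that you record. Your arithmetic in all four cases, including the degenerate case $T=0$, $n\equiv3\pmod4$, checks out.
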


\subsection{The dimension of ${\rm Hom}_{A^{\rm e}}
({\rm Ker}\,\partial^{n},A)$}
Until the end of this paper, we suppose that 
the product $S_{4,0}=q_{0}q_{1}q_{2}q_{3}$ $(\in 
K^{\times})$ is not a root of unity. Therefore
$S_{4m,t}^{k}=S_{4,0}^{mk}=(q_{0}q_{1}q_{2}
q_{3})^{km}\neq1_{K}$ for all $t\in \mathbb{Z}$ 
and integers $k\geq0$ and $m\geq0$. 

Now, using the equations (\ref{image_2m+1}) and 
(\ref{image_2m}), we can find a $K$-basis of the 
kernel of $i_{n}^{*}={\rm Hom}_{A^{\rm e}}
(i_{n},A)$ $(n\geq0)$ in (\ref{exact_sequence}). 

\begin{lem} \label{lem_basis}
If $T=0$, then 
\begin{enumerate}[\rm(i)]
\item \begin{enumerate}[\rm(a)]
\item The set $\{\sum_{j=0}^{3}\beta^{0,0}_{0,j,0}
\}$ gives a $K$-basis of ${\rm Ker}\,i_{0}^{*}$. 
\item For $m\geq1$, ${\rm Ker}\,i_{4m}^{*}=\{0\}$. 
\end{enumerate}

\item \begin{enumerate}[\rm(a)]

\item The set $\{\beta_{r,r,0}^{1,0}+\beta_{r+1,r,
1}^{1,0}-\beta_{r,r+1,1}^{1,0}-\beta_{r+1,r+1,
0}^{1,0}\mid r=0,1,2\}\cup\{\beta_{0,0,0}^{1,0}+
\beta_{1,0,1}^{1,0},\ \beta_{1,0,1}^{1,0}+
\beta_{0,1,1}^{1,0}+\beta_{1,2,1}^{1,0}+\beta_{0,3,
1}^{1,0}\}$ gives a $K$-basis of ${\rm Ker}\,i_{1}^{*}$. 

\item For $m\geq1$, the set $\{S_{u,r}\beta_{r,r,
u}^{4m+1,0}+\beta_{r+1,r,u+1}^{4m+1,0}-S_{4m-u,u+r
+1}\beta_{r,r+1,u+1}^{4m+1,0}-\beta_{r+1,r+1,u}^{4m
+1,0}\mid0\leq r\leq3;\ 0\leq u\leq4m\}$ gives a 
$K$-basis of ${\rm Ker}\,i_{4m+1}^{*}$. 
\end{enumerate}

\item For $m\geq0$, the set $\{\beta_{0,r,u}^{4m+2,
0}\mid0\leq r\leq3;\ 0\leq u\leq4m+2\}$ gives a 
$K$-basis of ${\rm Ker}\,i_{4m+2}^{*}$. 

\item For $m\geq0$, ${\rm Ker}\,i_{4m+3}^{*}=\{0\}$. 
\end{enumerate}
If $T\geq1$, then 
\begin{enumerate}[\rm(i)]

\item \begin{enumerate}[\rm(a)]

\item The set $\{\sum_{j=0}^{3}\beta_{0,j,0}^{0,
0}\}\cup\{\sum_{j=0}^{3}\beta_{l,j,0}^{0,k}\mid 
l=0,1;\ 1\leq k\leq T\}$ gives a $K$-basis of 
${\rm Ker}\,i_{0}^{*}$. 

\item For $m\geq1$, the set $\{S_{u,r}\beta_{r,r,
u}^{4m,k}+\beta_{r,r+1,u}^{4m,k}\mid0\leq r\leq
3;\ 0\leq u\leq2m-1;\ 1\leq k\leq T\}\cup\{\beta_{r,
r+1,u}^{4m,k}+S_{4m-u,r+u+1}\beta_{r,r+2,u}^{4m,k}
\mid0\leq r\leq3;\ 2m+1\leq u\leq4m;\ 1\leq k\leq 
T\}\cup\{S_{4m,0}\beta_{l,l,2m}^{4m,k}+S_{2m,l+2}
\beta_{l,l+1,2m}^{4m,k}+S_{2m,l+2}S_{2m,l+3}
\beta_{l,l+2,2m}^{4m,k}+S_{2m,l+3}\beta_{l,l+3,
2m}^{4m,k}\mid l=0,1;\ 1\leq k\leq T\}$ gives a 
$K$-basis of ${\rm Ker}\,i_{4m}^{*}$. 
\end{enumerate}

\item \begin{enumerate}[\rm(a)]
\item The set $\{\beta_{r,r,0}^{1,k},\ \beta_{r,
r+1,1}^{1,k}\mid0\leq r\leq3;\ 1\leq k\leq T\}\cup
\{\beta_{0,0,0}^{1,0}+\beta_{1,0,1}^{1,0}\}\cup 
\{\beta_{r,r,0}^{1,0}+\beta_{r+1,r,1}^{1,0}-
\beta_{r,r+1,1}^{1,0}-\beta_{r+1,r+1,0}^{1,0}
\mid r=0,1,2\}$ 
$$\qquad\qquad\qquad\cup\begin{cases}
\{\beta_{0,1,1}^{1,0}+\beta_{0,3,1}^{1,0},\ 
\beta_{1,0,1}^{1,0}+\beta_{1,2,1}^{1,0}\}
& \mbox{if ${\rm char}\,K\mid2T+1$}\\
\{\beta_{0,1,1}^{1,0}+\beta_{0,3,1}^{1,0}+\beta_{1,
0,1}^{1,0}+\beta_{1,2,1}^{1,0}\} & \mbox{if 
${\rm char}\,K\nmid2T+1$}
\end{cases}$$
gives a $K$-basis of ${\rm Ker}\,i_{1}^{*}$. 

\item For $m\geq1$, the set $\{S_{u,r}\beta_{r,r,
u}^{4m+1,0}+\beta_{r+1,r,u+1}^{4m+1,T}-S_{4m-u,u+r
+1}\beta_{r,r+1,u+1}^{4m+1,T}-\beta_{r+1,r+1,u}^{4m
+1,0}\mid0\leq r\leq3;\ 0\leq u\leq2m-1\}\cup\{
S_{u,r}\beta_{r,r,u}^{4m+1,T}+\beta_{r+1,r,u+1}^{4m
+1,0}-S_{4m-u,u+r+1}\beta_{r,r+1,u+1}^{4m+1,0}-
\beta_{r+1,r+1,u}^{4m+1,T}\mid0\leq r\leq3;\ 2m+1
\leq u\leq4m\}\cup\{\beta_{r,r,u}^{4m+1,k}\mid0
\leq r\leq3;\ 0\leq u\leq2m;\ 1\leq k\leq T\}\cup
\{\beta_{r+1,r,u}^{4m+1,k}\mid0\leq r\leq3;\ 
2m+1\leq u\leq4m+1;\ 1\leq k\leq T\}\cup
\{S_{2m,r}\beta_{r,r,2m}^{4m+1,0}+\beta_{r+1,r,
2m+1}^{4m+1,0}-S_{2m,r+3}\beta_{r,r+1,2m+1}^{4m+1,
0}-\beta_{r+1,r+1,2m}^{4m+1,0}\mid r=0,1\}$ 
$$\qquad\qquad\qquad\cup\begin{cases}
\{S_{2m,r+l}\beta_{r+l,r,2m+l}^{4m+1,0}+
S_{2m,r+l+3}\beta_{r+l,r+2,2m+l}^{4m+1,0}\\ 
\hspace{2cm}\mid r=0,1;\ l=0,1\}\quad\mbox{if 
${\rm char}\,K\mid2T+1$}\\
\{S_{2m,r}\beta_{r,r,2m}^{4m+1,0}+\beta_{r+1,r,
2m+1}^{4m+1,0}-S_{2m,r+3}\beta_{r,r+1,2m+1}^{4m+1,0}\\
\hspace{2cm}-\beta_{r+1,r+1,2m}^{4m+1,0}\mid r=2,3\}
\quad\mbox{if ${\rm char}\,K\nmid2T+1$}
\end{cases}$$
gives a $K$-basis of ${\rm Ker}\,i_{4m+1}^{*}$. 
\end{enumerate}

\item For $m\geq0$, the set $\{S_{u,r}\beta_{r,
r,u}^{4m+2,k}+\beta_{r,r+1,u}^{4m+2,k}\mid 
0\leq r\leq3;\ 0\leq u\leq2m;\ 0\leq k\leq T-1\} 
\cup\{\beta_{r,r+1,u}^{4m+2,k}+S_{4m-u+2,u+r+1}
\beta_{r,r+2,u}^{4m+2,k}\mid0\leq r\leq3;\ 2m+2
\leq u\leq4m+2;\ 0\leq k\leq T-1\}\cup\{S_{2m,l}
\beta_{l,l+3,2m+1}^{4m+2,k}+S_{2m,l}S_{2m+1,l+2}
\beta_{l,l+2,2m+1}^{4m+2,k}+S_{2m,l+3}\beta_{l,l+1,
2m+1}^{4m+2,k}+S_{4m+1,l}\beta_{l,l,2m+1}^{4m+2,k}
\mid l=0,1;\ 0\leq k\leq T-1\}\cup\{\beta_{0,r,
u}^{4m+2,T}\mid0\leq r\leq3;\ 0\leq u\leq4m+2\}$ 
gives a $K$-basis of ${\rm Ker}\,i_{4m+2}^{*}$. 

\item For $m\geq0$, the set $\{\beta_{r,r,u}^{4m+3,
k}\mid0\leq r\leq3;\ 0\leq u\leq2m+1;\ 0\leq k\leq 
T-1\}\cup\{\beta_{r,r+1,u}^{4m+3,k}\mid0\leq r\leq
3;\ 2m+2\leq u\leq4m+3;\ 0\leq k\leq T-1\}$
gives a $K$-basis of ${\rm Ker}\,i_{4m+3}^{*}$. 
\end{enumerate}
\end{lem}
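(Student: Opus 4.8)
The plan is to compute $\operatorname{Ker} i_n^* = \operatorname{Ker}\operatorname{Hom}_{A^{\mathrm e}}(i_n,A)$ directly, where $i_n\colon \operatorname{Ker}\partial^n \hookrightarrow R^n$, by identifying it with the space of bimodule homomorphisms $R^n\to A$ that vanish on $\operatorname{Ker}\partial^n$, equivalently those that factor through $\operatorname{Im}\partial^n = \operatorname{Ker}\partial^{n-1}$; so $\operatorname{Ker} i_n^* \cong \operatorname{Im}(\partial^{n})^*$, the image of precomposition with $\partial^n$ inside $(R^n)^*$. Concretely, since $(R^n)^*$ has the explicit $K$-basis $(\ref{basis_P_n})$ of the maps $\beta_{l,i,j}^{n,k}$, and $(R^{n-1})^*$ likewise, I would write down the matrix of $(\partial^n)^* \colon (R^{n-1})^* \to (R^n)^*$ with respect to these bases. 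The entries of this matrix are read off from the defining formulas $(\ref{image_2m+1})$ and $(\ref{image_2m})$ for $\partial^n$ on the generators $\mathfrak{p}_{i,j}^n$: for a basis element $\beta_{l,r,s}^{n-1,k}$ of $(R^{n-1})^*$, the composite $\beta_{l,r,s}^{n-1,k}\circ\partial^n$ sends $\mathfrak{p}_{i,j}^n$ to the appropriate scalar multiple (involving the $S_{u,v}$ coefficients and, via the multiplication $A\cdot A$, the monomials $x_i^{4k+t}$) of a basis element of $A$, hence is itself a $K$-linear combination of the $\beta$'s. Then $\operatorname{Ker} i_n^*$ is the span of these composites, and the listed sets are obtained by computing that span and extracting a basis. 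The case split on $n \bmod 4$ comes from the four shapes of $e_iAe_{i+n}$ recorded just before Lemma~\ref{hom_dim}, and the further split on $T=0$ versus $T\geq1$ and on $\operatorname{char} K \mid 2T+1$ is forced by when certain coefficient matrices degenerate.

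The key simplifying input, already highlighted in the excerpt, is the standing hypothesis that $S_{4,0}=q_0q_1q_2q_3$ is not a root of unity, so $S_{4m,t}^k \neq 1_K$ for all $m,k\geq1$; this is what kills the many would-be kernel elements and keeps the answer finite-dimensional. In practice I would treat each residue $r\in\{0,1,2,3\}$ of $n\bmod 4$ separately. For $n\equiv 3\pmod 4$ (and $T=0$) the claim $(R^n)^*=0$ already gives $\operatorname{Ker} i_n^*=\{0\}$ trivially; for $T\geq1$ the target monomials are $x_l^{4k+3}$ with $0\leq k\leq T-1$, and one checks directly which combinations of $\partial^{4m+3}$-pullbacks survive, using that the diagonal blocks are governed by $1 - S_{4m,*}$ which is invertible. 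For $n\equiv 0,2\pmod 4$ the relations $\beta_{0,i,j}^{4u,0}=\beta_{1,i,j}^{4u,0}$ and $\beta_{i+1,i,j}^{4u+2,T}=-q_i\beta_{i,i,j}^{4u+2,T}$ noted after the definition of $\beta$ must be kept in mind so as not to double-count basis vectors. Throughout, the bookkeeping uses the multiplicativity identities $S_{r,u}S_{t,r+u}=S_{r+t,u}$ and $S_{2t,2t+u}=S_{2t,u+2}$ recorded in Section~\ref{section_resolution}.

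The main obstacle I anticipate is purely organizational rather than conceptual: the matrix of $(\partial^n)^*$ is large and block-structured, with blocks indexed by $(i,j)$ versus $(r,s)$ and the entries depending delicately on whether $j$ and $s$ lie below, at, or above the ``middle'' indices $m$ (or $2m$, $2m+1$) in the piecewise formulas. Getting the index ranges exactly right at the boundary cases $j=0$, $j=m$, $j=2m$, $j=2m+1$ — where the formulas for $\partial^n$ change shape and where the degenerate coefficient behavior lives — is where almost all the care goes, and it is exactly these boundary terms that produce the ``exceptional'' basis elements in parts (i)(a), (i)(b), (ii)(a), (ii)(b), and (iii) of the $T\geq1$ list (the ones with four $\beta$-terms and mixed $S$-coefficients, and the $\operatorname{char}K\mid 2T+1$ alternatives). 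I would therefore organize the proof as: (1) set up $(\partial^n)^*$ on the $\beta$-basis via $(\ref{image_2m+1})$ and $(\ref{image_2m})$; (2) row-reduce the ``generic'' (interior-index) part of the matrix using $S_{4m,*}\neq 1_K$; (3) handle the boundary-index rows separately, where the $\operatorname{char}K\mid 2T+1$ dichotomy appears; (4) assemble a basis of the image and match it against the stated lists, residue by residue. Verifying $\partial^n\partial^{n+1}=0$ (Remark~\ref{resolution_remark}(a)) is tacitly used to guarantee these composites actually land in $\operatorname{Im}(\partial^n)^*\subseteq \operatorname{Ker} i_n^*$ with no further relations, so I would invoke that as well. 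Since, as the authors note for the resolution itself, ``the proof is similar to that found in some recent works, for example \cite{ES, F1, F2, SS, ST}'', I expect the write-up to follow the same template, with only the coefficient computations being specific to the present $q_i$.
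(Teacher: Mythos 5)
Your overall framework---identify $\mathrm{Ker}\,i_n^*$ with a subspace of $(R^n)^*$ cut out by the differentials, express everything in the $\beta$-basis \eqref{basis_P_n}, and solve the resulting linear system using that $S_{4,0}$ is not a root of unity---is the right template, and it is what the paper does. But your key identification is wrong, and the error is fatal rather than cosmetic. A homomorphism $\phi\colon R^n\to A$ vanishes on $\mathrm{Ker}\,\partial^n=\mathrm{Im}\,\partial^{n+1}$ if and only if $\phi\circ\partial^{n+1}=0$; that is,
$$\mathrm{Ker}\,i_n^*=\mathrm{Ker}\bigl((\partial^{n+1})^*\bigr),$$
the space of $n$-\emph{cocycles}, computed from the differential one degree \emph{above}. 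You instead assert $\mathrm{Ker}\,i_n^*\cong\mathrm{Im}\,(\partial^{n})^*$ and propose to compute the span of the composites $\beta^{n-1,k}_{l,r,s}\circ\partial^n$, which is the space of $n$-\emph{coboundaries}. The slip is in the sentence ``equivalently those that factor through $\mathrm{Im}\,\partial^n$'': such a $\phi$ does factor as $\bar\phi\circ\partial^n$ with $\bar\phi$ defined on the submodule $\mathrm{Im}\,\partial^n=\mathrm{Ker}\,\partial^{n-1}$ of $R^{n-1}$, but $\bar\phi$ need not extend to all of $R^{n-1}$ ($A$ is self-injective as a one-sided module, not injective as a bimodule), so $\phi$ need not lie in the image of precomposition. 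The discrepancy between the two spaces is exactly $\mathrm{HH}^n(A)$, which by Theorem~\ref{main_thm}(a) is nonzero in almost every degree; indeed, if your identification held, the exact sequence \eqref{exact_sequence} would force $\mathrm{HH}^n(A)=0$ for all $n\geq1$, contradicting the theorem the lemma is meant to feed. Already for $n=0$ your recipe yields $\{0\}$ rather than the $(2T+1)$-dimensional space $\mathrm{HH}^0(A)$ recorded in part (i)(a).

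The correct computation, which is the paper's, runs as follows: write a general $\phi\in(R^n)^*$ as a linear combination of the basis \eqref{basis_P_n}, impose $\phi(\partial^{n+1}(\mathfrak{p}^{n+1}_{i,j}))=0$ for all $i,j$ using \eqref{image_2m+1} and \eqref{image_2m}, and solve the resulting system for the coefficients (this is where $S_{4m,t}\neq1_K$ and the $\mathrm{char}\,K\mid 2T+1$ dichotomy enter); conversely, check that each listed element annihilates every $\partial^{n+1}(\mathfrak{p}^{n+1}_{i,j})$ and that the listed elements are linearly independent. Your organizational remarks about the block structure, the boundary indices $j=0,m,2m,2m+1$, and the identifications $\beta^{4u,0}_{0,i,j}=\beta^{4u,0}_{1,i,j}$ and $\beta^{4u+2,T}_{i+1,i,j}=-q_i\beta^{4u+2,T}_{i,i,j}$ all carry over once the differential is replaced by $\partial^{n+1}$, but as written the proposal computes the wrong subspace.
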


\begin{proof}
We only verify (i)(b) for the case $T\geq1$. 
Suppose that $T\geq1$ and $m\geq1$, and we put 
$\mathcal{B}:=\{S_{u,r}\beta_{r,r,u}^{4m,k}+
\beta_{r,r+1,u}^{4m,k}\mid0\leq r\leq3;\ 0\leq 
u\leq2m-1;\ 1\leq k\leq T\}\cup\{\beta_{r,r+1,
u}^{4m,k}+S_{4m-u,r+u+1}\beta_{r,r+2,u}^{4m,k}\mid
0\leq r\leq3;\ 2m+1\leq u\leq4m;\ 1\leq k\leq T\}
\cup\{S_{4m,0}\beta_{l,l,2m}^{4m,k}+S_{2m,l+2}
\beta_{l,l+1,2m}^{4m,k}+S_{2m,l+2}S_{2m,l+3}
\beta_{l,l+2,2m}^{4m,k}+S_{2m,l+3}\beta_{l,l+3,
2m}^{4m,k}\mid l=0,1;\ 1\leq k\leq T\}$. Denote 
by $V$ the subspace in $(R^{4m})^{*}$ generated by 
$\mathcal{B}$. It is straightforward to see that the 
elements in $\mathcal{B}$ are linearly independent, 
and so it suffices to verify $V={\rm Ker}\,i_{4m}^{*}$. 

First, let $\psi\in\mathcal{B}$. Then it is not 
hard to check that $\psi$ sends all elements in 
(\ref{image_2m+1}) to zero, namely, $\psi
(\partial^{4m+1}(\mathfrak{p}_{i,j}^{4m+1}))=0$ for 
all $0\leq i\leq3$ and $0\leq j\leq 4m+1$. So 
we get $\psi({\rm Ker}\,\partial^{4m})=\psi
({\rm Im}\,\partial^{4m+1})=0$, which shows 
$\psi\in{\rm Ker}\,i_{4m}^{*}$, since $i_{4m}$ 
is the inclusion map. Hence it follows that 
$V\subseteq{\rm Ker}\,i_{4m}^{*}$. 

Conversely, let $\phi\in{\rm Ker}\,i_{4m}^{*}$. By 
(\ref{basis_P_n}), we can write $\phi=(\sum_{i=0}^{3}
\sum_{j=0}^{4m}v_{i,j}\beta_{0,i,j}^{4m,0})+
\{\sum_{i=0}^{3}\sum_{j=0}^{4m}\sum_{k=1}^{T}(w_{0,
i,j}^{k}\beta_{0,i,j}^{4m,k}+w_{1,i,j}^{k}\beta_{1,
i,j}^{4m,k})\}$ for $v_{i,j}$, $w_{l,i,j}^{k}\in K$ 
($0\leq i\leq3$, $0\leq j\leq4m$, $1\leq k\leq T$, 
$l=0,1$). We always consider the subscripts $i$ of 
$v_{i,j}$ and $w_{l,i,j}^{k}$ as modulo $4$, and 
$l$ of $w_{l,i,j}^{k}$ as modulo $2$. Since $\phi
\in{\rm Ker}\,i_{4m}^{*}$, we get $\phi({\rm Im}\,
\partial^{4m+1})=\phi({\rm Ker}\,\partial^{4m})=0$, 
so that $\phi(\partial^{4m+1}(\mathfrak{p}^{4m+1}_{r,
u}))=0$ for $0\leq r\leq3$ and $0\leq u\leq4m+1$. 
Then, by direct computations of the images of 
(\ref{image_2m+1}) under $\phi$, we get: for $0\leq 
r\leq3$, $(v_{r,0}-v_{r+1,0})e_{r}x_{r}+\sum_{k=1}^{T}
(w_{r,r,0}^{k}-w_{r,r+1,0}^{k})e_{r}x_{r}^{4k+1}=0$, 
$(S_{4m-u+1,r+u-1}v_{r,u-1}-v_{r+1,u-1})e_{r}
x_{r+1}^{4T+1}+(v_{r,u}-S_{u,r}v_{r+1,u})e_{r}x_{r}
+\sum_{k=1}^{T}(w_{r,r,u}^{k}-
S_{u,r}w_{r,r+1,u}^{k})e_{r}x_{r}^{4k+1}=0$ $(1\leq 
u\leq2m)$, $(S_{4m-u+1,r+u-1}v_{r,u-1}-v_{r+1,u-1})
e_{r}x_{r+1}+(v_{r,u}-S_{u,r}v_{r+1,u})e_{r}x_{r}^{4T
+1}+\sum_{k=1}^{T}(S_{4m-u+1,r+u-1}w_{r+1,r,u-1}^{k}-
w_{r+1,r+1,u-1}^{k})e_{r}x_{r+1}^{4k+1}$ $=0$ $(2m+1
\leq u\leq4m)$, $(v_{r,4m}-v_{r+1,4m})e_{r}x_{r+1}
+\sum_{k=1}^{T}(w_{r+1,r,4m}^{k}-w_{r+1,r+1,4m}^{k})$ 
$e_{r}x_{r+1}^{4k+1}=0$, which give the system of 
equations: for $0\leq r\leq3$ and $1\leq k\leq T$, 
$v_{r,u}-S_{u,r}v_{r+1,u}=0$ $(0\leq u\leq2m)$, 
$S_{4m-u,r+u}v_{r,u}-v_{r+1,u}=0$ $(0\leq u\leq2m-1)$, 
$w_{r,r,u}^{k}-S_{u,r}w_{r,r+1,u}^{k}=0$ $(0\leq 
u\leq2m)$, $S_{4m-u,r+u}v_{r,u}-v_{r+1,u}=0$ $(2m
\leq u\leq4m)$, $v_{r,u}-S_{u,r}v_{r+1,u}=0$ $(2m+1
\leq u\leq4m)$, $S_{4m-u,r+u}w_{r+1,r,u}^{k}-w_{r+1,
r+1,u}^{k}=0$ $(2m\leq u\leq4m)$. So it follows, for 
$0\leq r\leq3$ and $1\leq k\leq T$, that $v_{r,u}=0$ 
$(0\leq u\leq4m)$, $w_{r,r,u}^{k}=S_{u,r}w_{r,r+1,
u}^{k}$ $(0\leq u\leq2m-1)$, $S_{4m-u,r+u}w_{r+1,r,
u}^{k}=w_{r+1,r+1,u}^{k}$ $(2m+1\leq u\leq 4m)$, 
$S_{4m,0}w_{l,l+3,2m}^{k}=S_{2m,l}w_{l,l+2,2m}^{k}
=S_{2m,l+3}w_{l,l,2m}^{k}=S_{2m,l+3}S_{2m,l}w_{l,
l+1,2m}^{k}$ $(l=0,1)$. Then, using these equations, 
$\phi$ can be written as $\phi=\{\sum_{r=0}^{3}
\sum_{k=1}^{T}\sum_{u=0}^{2m-1}w_{r,r+1,u}(S_{u,r}
\beta_{r,r,u}^{4m,k}+\beta_{r,r+1,u}^{4m,k})\}+
\{\sum_{r=0}^{3}\sum_{k=1}^{T}\sum_{u=2m+1}^{4m}$
$w_{r,r+1,u}(\beta_{r,r+1,u}^{4m,k}+S_{4m-u,r+u+1}
\beta_{r,r+2,u}^{4m,k})\}+\{\sum_{k=1}^{T}
\sum_{l=0}^{1}S_{4m,0}^{-1}w_{l,l,2m}^{k}(S_{4m,0}
\beta_{l,l,2m}^{4m,k}+S_{2m,l+2}\beta_{l,l+1,2m}^{4
m,k}+S_{2m,l+2}S_{2m,l+3}\beta_{l,l+2,2m}^{4m,k}+
S_{2m,l+3}\beta_{l,l+3,2m}^{4m,k})\}$, which shows 
$\phi\in V$. Accordingly it follows that 
${\rm Ker}\,i_{4m}^{*}\subseteq V$. 

Therefore we get the required $K$-basis. 
Similarly, by using (\ref{image_2m+1}) and 
(\ref{image_2m}), we can verify the remaining 
statements. 
\end{proof}

\noindent
By the sequence (\ref{exact_sequence}), 
we get $\dim_{K}({\rm Ker}\,\partial^{n})^{*}
=\dim_{K}{\rm Ker}\,i_{n+1}^{*}$ for $n\geq-1$ 
(where ${\rm Ker}\,\partial^{-1}:=A$), so that 
Lemma~\ref{lem_basis} gives us the dimension of 
$({\rm Ker}\,\partial^{n})^{*}$: 

\begin{prop}\label{ker_dim}
For integers $m\geq-1$ and $0\leq r\leq3$ 
with $4m+r\geq-1$, 
\begin{align*}
&\dim_{K}({\rm Ker}\,\partial^{4m+r})^{*}\\
&=\begin{cases}
2T+1	& \mbox{if $m=-1$ and $r=3$}\\
8T+6 	& \mbox{if $m=r=0$ and ${\rm 
char}\,K\mid2T+1$}\\
8T+5	& \mbox{if $m=r=0$ and ${\rm 
char}\,K\nmid2T+1$}\\
2(8m+3)+8T(2m+1) 	& \mbox{if $m\geq1$, $r=0$ 
and ${\rm char}\,K\mid2T+1$}\\
4(4m+1)+8T(2m+1)	& \mbox{if $m\geq1$, $r=0$ 
and ${\rm char}\,K\nmid2T+1$}\\
4(4m+3)+2T(8m+5)& \mbox{if $m\geq0$ and $r=1$}\\
16T(m+1) 	& \mbox{if $m\geq0$ and $r=2$}\\
2T(8m+9) 	& \mbox{if $m\geq0$ and $r=3$.}
\end{cases}
\end{align*}
\end{prop}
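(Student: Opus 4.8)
The plan is to obtain Proposition~\ref{ker_dim} directly from Lemma~\ref{lem_basis} together with the identity $\dim_{K}({\rm Ker}\,\partial^{n})^{*}=\dim_{K}{\rm Ker}\,i_{n+1}^{*}$ for $n\geq-1$ (with ${\rm Ker}\,\partial^{-1}:=A$) read off from the exact sequence (\ref{exact_sequence}). So the whole argument is a bookkeeping computation: for each pair $(m,r)$ in the statement one sets $n=4m+r$, identifies which part of Lemma~\ref{lem_basis} exhibits a $K$-basis of ${\rm Ker}\,i_{n+1}^{*}$, and counts the basis vectors listed there.

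First I would fix the dictionary between $n$ and $n+1$. If $r=0$ then $n+1=4m+1$, so one uses part (ii) of Lemma~\ref{lem_basis}: part (ii)(a) when $m=0$, part (ii)(b) when $m\geq1$. If $r=1$ then $n+1=4m+2$ and one uses part (iii). If $r=2$ then $n+1=4m+3$ and one uses part (iv). If $r=3$ then $n+1=4(m+1)$, so one uses part (i): part (i)(a) when $m=-1$ (where $n+1=0$ and ${\rm Ker}\,\partial^{-1}:=A$), part (i)(b) when $m\geq0$. In each case the dimension is the sum, over the finitely many index ranges occurring in the union that constitutes the basis, of the products of the lengths of those ranges; the modular conventions on the subscripts of the $\beta$'s do not affect the count.

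Next I would run the count in the two regimes. When $T=0$ the bases are the short ones in Lemma~\ref{lem_basis}: for $n+1\equiv1$ $({\rm mod}\ 4)$ one gets $5$ vectors if $m=0$ and $4(4m+1)$ if $m\geq1$; for $n+1\equiv2$ one gets $4(4m+3)$; for $n+1\equiv3$, or $n+1\equiv0$ with $m\geq1$, the kernel is $\{0\}$, while for $n+1=0$ it is one-dimensional. When $T\geq1$ one adds up the several families appearing in Lemma~\ref{lem_basis}; for example the $T\geq1$ basis of ${\rm Ker}\,i_{4m+1}^{*}$ ($m\geq1$) contributes $8m+8m$ vectors from the first two families, $4(2m+1)T+4(2m+1)T$ from the two families indexed by $1\leq k\leq T$, $2$ from the family indexed by $r=0,1$, and $4$ or $2$ further vectors according to whether ${\rm char}\,K\mid2T+1$ or not, giving $2(8m+3)+8T(2m+1)$ or $4(4m+1)+8T(2m+1)$ respectively; the other residues are handled the same way and reproduce the remaining lines of the statement. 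I would also record the consistency check that the $T=0$ formulas are the $T=0$ specializations of the $T\geq1$ formulas, which works because $2T+1=1$ is coprime to ${\rm char}\,K$, so the ``${\rm char}\,K\nmid2T+1$'' branch applies.

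The only genuine obstacle is the length of the unions in Lemma~\ref{lem_basis} and keeping the boundary cases straight ($n=-1$; $n=0$; and the switch of $n+1$ between parts (i)(a)/(i)(b) and (ii)(a)/(ii)(b)); there is no conceptual difficulty once the basis lists are available, and I would present the computation as a compact case table rather than as prose.
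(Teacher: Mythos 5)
Your proposal is correct and is exactly the paper's argument: the paper derives Proposition~\ref{ker_dim} by noting $\dim_{K}({\rm Ker}\,\partial^{n})^{*}=\dim_{K}{\rm Ker}\,i_{n+1}^{*}$ from the exact sequence (\ref{exact_sequence}) and then counting the basis vectors listed in Lemma~\ref{lem_basis}. Your dictionary between $r$ and the relevant part of the lemma, and your sample count for ${\rm Ker}\,i_{4m+1}^{*}$, both check out against the stated formulas.
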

\noindent
Now, by using the sequence (\ref{exact_sequence}) 
and Proposition~\ref{ker_dim}, we immediately get 
the dimensions of the Hochschild cohomology groups 
of $A=A_{T}(q_{0},q_{1},q_{2},q_{3})$: 
\begin{thm}\label{main_thm}
Let $T\geq0$ be an integer, and let $q_{i}$ $(0\leq 
i\leq3)$ be any element in $K^{\times}$. Suppose 
that the product $q_{0}q_{1}q_{2}q_{3}$ is not a 
root of unity. Then 
\begin{enumerate}[\rm(a)]

\item For $m\geq0$ and $0\leq r\leq3$, 
$$\dim_{K}{\rm HH}^{4m+r}(A)=
\begin{cases}
2T+1	& \mbox{if $m=r=0$}\\
2T+3 	& \mbox{if $m=0$, $r=1$ and ${\rm 
char}\,K\mid2T+1$}\\
2T+2	& \mbox{if $m=0$, $r=1$ and ${\rm 
char}\,K\nmid2T+1$}\\
2T+2 	& \mbox{if $m=0$, $r=2$ and ${\rm 
char}\,K\mid2T+1$}\\
2T+1	& \mbox{if $m=0$, $r=2$ and ${\rm 
char}\,K\nmid2T+1$}\\
2T 	& \mbox{if $m\geq0$ and $r=3$,}\\ 
	& \quad \mbox{or if $m\geq1$ and $r=0$}\\
2T+2	& \mbox{if $m\geq1$, $r=1$ and ${\rm 
char}\,K\mid2T+1$,}\\
	& \quad \mbox{or if $m\geq1$, $r=2$ and 
${\rm char}\,K\mid2T+1$}\\
2T 	& \mbox{if $m\geq1$, $r=1$ and ${\rm 
char}\,K\nmid2T+1$,}\\
	& \quad \mbox{or if $m\geq1$, $r=2$ and 
${\rm char}\,K\nmid2T+1$.}
\end{cases}$$
\item ${\rm HH}^{n}(A)=0$ for all $n\geq3$ if and 
only if $T=0$. 
\end{enumerate}
\end{thm}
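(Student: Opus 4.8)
The plan is to deduce Theorem~\ref{main_thm}~(b) directly from part~(a), which in turn follows from Proposition~\ref{ker_dim} together with the exact sequence (\ref{exact_sequence}). First I would observe that (\ref{exact_sequence}) yields, for every $n\geq0$, the dimension formula
\[
\dim_{K}{\rm HH}^{n+1}(A)=\dim_{K}({\rm Ker}\,\partial^{n})^{*}-\dim_{K}(R^{n})^{*}+\dim_{K}({\rm Ker}\,\partial^{n-1})^{*},
\]
so that combining Lemma~\ref{hom_dim} and Proposition~\ref{ker_dim} gives part~(a) by a routine (if lengthy) bookkeeping over the residue of $n$ modulo $4$; I would not grind through these arithmetic cases, merely note that each one is a finite subtraction of the explicit polynomials in $m$ and $T$ already recorded.

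Granting part~(a), the proof of~(b) is then immediate and essentially a matter of reading off the table. For the ``if'' direction, suppose $T=0$: every entry in the table for $m\geq1$ (all four residues $r$) equals $2T=0$, and for $m=0$ the entries with $r=3$ also equal $2T=0$; hence $\dim_{K}{\rm HH}^{n}(A)=0$ for all $n\geq3$, i.e. ${\rm HH}^{n}(A)=0$ for $n\geq3$. For the ``only if'' direction, suppose $T\geq1$. Then, taking for instance $n=4m+3$ with $m\geq0$, part~(a) gives $\dim_{K}{\rm HH}^{n}(A)=2T\geq2>0$; these $n$ are arbitrarily large, so ${\rm HH}^{n}(A)\neq0$ for infinitely many $n\geq3$, and in particular the vanishing fails. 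Contrapositively, ${\rm HH}^{n}(A)=0$ for all $n\geq3$ forces $T=0$.

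The only genuine work is therefore hidden in part~(a), and within that the main obstacle is the $n\equiv0,1\pmod 4$ cases where the characteristic of $K$ enters: one must carefully track whether ${\rm char}\,K$ divides $2T+1$, since Lemma~\ref{lem_basis} already splits the bases of ${\rm Ker}\,i_{1}^{*}$ and ${\rm Ker}\,i_{4m+1}^{*}$ along exactly this condition, and the dimension count in Proposition~\ref{ker_dim} inherits the dichotomy. Once the dimensions of $({\rm Ker}\,\partial^{n})^{*}$ from Proposition~\ref{ker_dim} and of $(R^{n})^{*}$ from Lemma~\ref{hom_dim} are substituted into the displayed formula for each of the (at most) eight relevant residue/parity combinations, the table in~(a) drops out; the deduction of~(b) from it, as sketched above, is then purely formal. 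Thus I would present~(b) as a two-line corollary of~(a) and spend essentially no further effort on it.
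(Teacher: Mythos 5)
Your proposal is correct and follows exactly the paper's route: the paper likewise obtains part (a) "immediately" from the alternating-sum dimension count in the exact sequence (\ref{exact_sequence}) together with Lemma~\ref{hom_dim} and Proposition~\ref{ker_dim}, and part (b) by inspection of the resulting table. The only point worth making explicit in the ``if'' direction of (b) is that when $T=0$ one has $2T+1=1$, so the condition ${\rm char}\,K\mid 2T+1$ never holds and the entries $2T+2$ for $m\geq1$, $r=1,2$ do not occur; with that remark every relevant entry is $2T=0$, as you claim.
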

\begin{remark} \rm 
If $T=0$, since the global dimension of $A_{0}
(q_{0},q_{1},q_{2},q_{3})$ is infinite for all 
$q_{i}\in K^{\times}$ $(0\leq i\leq3)$, 
Theorem~\ref{main_thm}~(b) gives us a negative 
answer to Happel's question stated in 
Section~\ref{introduction}. 
\end{remark}
\noindent
We end this paper by describing the Hochschild 
cohomology ring modulo nilpotence ${\rm HH}^{*}(A)
/\mathcal{N}_{A}$ of $A=A_{T}(q_{0},q_{1},q_{2},
q_{3})$ for $T=0$, where $\mathcal{N}_{A}$ is the 
ideal generated by all homogeneous nilpotent elements 
in ${\rm HH}^{*}(A)$, and thus ${\rm HH}^{*}(A)/
\mathcal{N}_{A}$ is a commutative graded algebra. 
As a consequence of Theorem~\ref{main_thm}, we get 
the following: 

\begin{cor}\label{Ho_vanish}
Let $T=0$ and $q_{i}\in K^{\times}$ for $0\leq 
i\leq3$. Suppose that $q_{0}q_{1}q_{2}q_{3}$ is not 
a root of unity. Then ${\rm HH}^{*}(A)$ is a 
$4$-dimensional local algebra, and ${\rm HH}^{*}(A)
/\mathcal{N}_{A}$ is isomorphic to $K$. 
\end{cor}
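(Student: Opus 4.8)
The plan is to deduce Corollary~\ref{Ho_vanish} directly from the case $T=0$ of Theorem~\ref{main_thm}~(a), since for $T=0$ the dimension formula collapses dramatically. First I would record the values: with $T=0$ and $q_0q_1q_2q_3$ not a root of unity, Theorem~\ref{main_thm}~(a) gives $\dim_K{\rm HH}^0(A)=2T+1=1$, $\dim_K{\rm HH}^1(A)=2T+2=2$ (note ${\rm char}\,K\nmid 1$, so the relevant branch is the "$\nmid 2T+1$" one), $\dim_K{\rm HH}^2(A)=2T+1=1$, and $\dim_K{\rm HH}^n(A)=0$ for all $n\geq3$. Hence as a graded $K$-space ${\rm HH}^*(A)=\bigoplus_{n\geq0}{\rm HH}^n(A)$ has total dimension $1+2+1=4$, which gives the "$4$-dimensional" assertion.

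Next I would address the algebra structure. The Hochschild cohomology ring ${\rm HH}^*(A)$ is a graded-commutative $K$-algebra with ${\rm HH}^0(A)=Z(A)$; since $A$ is a basic connected algebra over the algebraically closed field $K$, $Z(A)$ is a local ring, and in degree $0$ it is just $K$ (as $\dim_K Z(A)=1$). Every element of positive degree lies in degrees $1$ or $2$ only, and the product of two positive-degree elements lands in degree $\geq2$; moreover any product of three positive-degree elements, or of a degree-$1$ and a degree-$2$ element, or of two degree-$2$ elements, lies in degree $\geq3$, where the cohomology vanishes. Thus the ideal ${\rm HH}^{\geq1}(A)$ of positive-degree elements satisfies $\big({\rm HH}^{\geq1}(A)\big)^2\subseteq{\rm HH}^{\geq2}(A)$ and $\big({\rm HH}^{\geq1}(A)\big)^3=0$, so ${\rm HH}^{\geq1}(A)$ is a nilpotent ideal; since the quotient by it is ${\rm HH}^0(A)=K$, a field, ${\rm HH}^{\geq1}(A)$ is the unique maximal ideal and ${\rm HH}^*(A)$ is local.

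Finally, for the ring modulo nilpotence: by definition $\mathcal{N}_A$ is the ideal generated by all homogeneous nilpotent elements, and every homogeneous element of positive degree is nilpotent (each generates an ideal contained in the nilpotent ideal ${\rm HH}^{\geq1}(A)$, or more concretely its cube is zero), so ${\rm HH}^{\geq1}(A)\subseteq\mathcal{N}_A$; conversely $\mathcal{N}_A$ is a proper ideal (as ${\rm HH}^*(A)$ is local with residue field $K$, and $1$ is not nilpotent), hence $\mathcal{N}_A={\rm HH}^{\geq1}(A)$ and ${\rm HH}^*(A)/\mathcal{N}_A\cong{\rm HH}^0(A)/\big(\mathcal{N}_A\cap{\rm HH}^0(A)\big)\cong K$. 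The only point needing the smallest bit of care is confirming that for $T=0$ one is in the branch ${\rm char}\,K\nmid 2T+1$ of the theorem (which holds automatically since $2T+1=1$), so there is no genuine obstacle here; the corollary is a short bookkeeping consequence of the main theorem together with the standard graded-commutativity and the local structure of the center.
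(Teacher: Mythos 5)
Your proposal is correct and matches the paper's (implicit) argument: the paper simply reads off from Theorem~\ref{main_thm} that for $T=0$ the dimensions are $1,2,1,0,0,\dots$, so ${\rm HH}^{*}(A)$ is $4$-dimensional with ${\rm HH}^{0}(A)=K$ and every positive-degree element nilpotent (cube zero), whence the ring is local and ${\rm HH}^{*}(A)/\mathcal{N}_{A}\cong K$. Your extra care about the branch ${\rm char}\,K\nmid 2T+1=1$ and the identification $\mathcal{N}_{A}={\rm HH}^{\geq1}(A)$ is exactly the bookkeeping the paper leaves to the reader.
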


\end{document}